\newtheorem{thm}{Theorem}
\title{Collocation-based harmonic balance framework for highly accurate periodic solution of nonlinear dynamical system}
\author[,$\dagger$,a,b]{Honghua Dai\thanks{Corresponding author: E-mail: hhdai@nwpu.edu.cn}}
\author[,a,b]{Zipu Yan\thanks{These authors contributed equally to this work.}}
\author[a,b,c]{Xuechuan Wang}
\author[a,b]{Xiaokui Yue}
\author[c]{Satya N. Atluri}
\affil[a]{School of Astronautics, Northwestern Polytechnical University, 127 West Youyi Road, Xi'an, 710072, Shaanxi, China}
\affil[b]{National Key Laboratory of Aerospace Flight Dynamics, Northwestern Polytechnical University, 127 West Youyi Road, Xi'an, 710072, Shaanxi, China}
\affil[c]{Department of Mechanical Engineering, Texas Tech University, 2500 Broadway, Lubbock, 79401, Texas, USA}
\begin{document}
\maketitle


\begin{abstract}
Periodic dynamical systems ubiquitously exist in science and engineering. The harmonic balance (HB) method and its variants have been the most widely-used approaches for such systems, but are either confined to low-order approximations or impaired by aliasing and improper-sampling problems. Here we propose a collocation-based harmonic balance framework to successfully unify and reconstruct the HB-like methods. Under this framework a new conditional identity, which exactly bridges the gap between frequency-domain and time-domain harmonic analyses, is discovered by introducing a novel aliasing matrix. Upon enforcing the aliasing matrix to vanish, we propose a powerful reconstruction harmonic balance (RHB) method that obtains extremely high-order ($>$100) non-aliasing solutions, previously deemed out-of-reach, for a range of complex nonlinear systems including the cavitation bubble equation and the three-body problem. We show that the present method is 2-3 orders of magnitude more accurate and simultaneously much faster than the state-of-the-art method. Hence, it has immediate applications in multi-disciplinary problems where highly accurate periodic solutions are sought.\\

\noindent\textbf{Keywords}: collocation-based framework, reconstruction harmonic balance method, aliasing matrix, conditional identity, three-body problem\\
\end{abstract}

\section{Introduction}
Periodic response plays a pivotal role across many scientific and engineering disciplines, ranging from mechanical vibrations\cite{kapania1987buckling, mchugh2021flutter, li2011ambient, ramananarivo2011rather}, fluid dynamics\cite{marmottant2003controlled, ambrose2010computation}, to celestial dynamics\cite{stone2019statistical, howard2000stability, yu2012generating}. Solving periodic solution to linear system is trivial with a batch of ready-to-use methods\cite{schiff1999laplace, blaquiere2012nonlinear}. For nonlinear problems, however, analytical methods become much more complicated due to the nullification of the additivity and homogeneity properties\cite{hernandez2019novel, baoyin2006solar}. Numerical integration methods can be used to find periodic nonlinear solutions\cite{liu2001cone, junkins2013picard, dai2015comparison, cordoba2005evidence}, but limited by i) undesired simulation time for transient motions; ii) small step-size to constrain accumulated computational error; and iii) inability to compute unstable periodic solutions. Credited to the seminal work of Blondel\cite{blondel1919amplitude}, the HB method has been the most popular method for nonlinear periodic solutions in various fields during the last 100 years\cite{kryloff1947introduction, stoker1950nonlinear, mickens1984comments, hall2002computation, saetti2021revisited}. It is free from the above restrictions, and works by presuming a Fourier expansion for the desired periodic solution and then obtaining resultant nonlinear algebraic equations (NAEs) of the Fourier coefficients through balancing harmonics up to the truncation order. However, the derivation of HB algebraic system could be extremely lengthy and tedious\cite{hall2002computation, dai2012simple} even with the aid of algebra software\cite{dai2014time}. This computational barrier, which also exists in its close relatives, e.g., incremental HB method\cite{lau1981amplitude}, impedes the HB method from analyzing sophisticated problems where high order harmonics are indispensable.

An ingenious attempt to overcome the computational difficulty of HB is the alternating frequency-time harmonic balance (AFT) method\cite{cameron1989alternating}. It bypasses the complex symbolic calculations via computationally-cheap numerical calculations, thus finding wide applications\cite{kim1990bifurcation, hou2017application, van2019frequency}. Based on the Shannon sampling theorem\cite{shannon1949communication}, the sampling rate of AFT is required to exceed twice the highest signal frequency to recover the nonlinear responses. Unexpectedly, numerical experiments reveal that the sampling rate could be far less than the prediction of Shannon theorem\cite{krack2019harmonic}. Regarding that, a more concise and elegant high dimensional harmonic balance (HDHB) method was proposed\cite{hall2002computation, liu2006comparison} using collocation, where the collocation number equals to the number of unknown Fourier coefficients, which is much less than the sampling rate of the AFT method. The HDHB method is computationally fast\cite{thomas2002nonlinear, ekici2011harmonic}, and a series of modified versions, e.g., the Chebyshev-based Time-spectral Method (C-TSM)\cite{ekici2020modeling}, the Supplemental-frequency Harmonic Balance (SF-HB) method\cite{li2021supplemental}, etc., have been developed for specific problems. However, both the AFT method and the family of HDHB methods are impaired by aliasing when dealing with non-polynomial nonlinear problems\cite{mickens1984comments, krack2019harmonic}. Worse yet, severe aliasing may cause non-physical solutions in the HDHB-like methods even for polynomial nonlinearity\cite{liu2006comparison, dai2012simple, dai2014time}. Our previous study proved that the HDHB method is inherently not a variation of the HB method, but equivalently a time domain collocation method in disguise\cite{dai2012simple}. Based on this finding, we extended the time domain collocation method by collocating at more nodes, and numerically suppressed the generation of non-physical solutions by least square method\cite{dai2013time}. However, the theoretical dealiasing rule is still unclear.

Here we propose a collocation-based harmonic balance framework to unify the HB, the AFT, and the HDHB methods. Under this framework, the theoretical dealiasing rule is revealed and plainly expressed by a novel aliasing matrix. It further leads to the discovery of an unprecedented conditional identity between the Fourier coefficients obtained from time domain analysis and frequency domain analysis. With these developments, we propose an advanced method for periodic analysis named as reconstruction harmonic balance (RHB) method. We transparently show that the RHB method can equivalently transform into the HB method, the AFT method, or the HDHB method by choosing corresponding collocation number. The present study completely addresses the computational difficulties of the HB method (computational burden due to symbolic calculations), the AFT method (aliasing in non-polynomial nonlinearity and redundant samplings) and the HDHB method (aliasing-induced  non-physical solutions). With the proposed method, very high order solutions can be obtained for strongly nonlinear dynamical systems with high accuracy and little computational effort.

The performance of the RHB method is evaluated by three nonlinear examples from structural vibrations, bubble dynamics, and orbital mechanics. First, the features of the RHB method are explored using the classical Duffing oscillator. Then the dynamical equation of cavitation bubble is investigated using the RHB method to demonstrate its capability of dealing with non-polynomial nonlinearities without causing aliasing. Moreover, the attractive circular restricted three-body problem (CRTBP) is studied, where the RHB method shows the superiority of being a highly efficient and robust alternative to the state-of-the-art method for finding periodic orbits. As highlighted by the results, the RHB method could be revolutionary to the field of orbital design for deep space exploration. The numerical results in this paper verified the effectiveness of the proposed RHB method in analyzing a wide range of problems across multiple disciplines.

\section{Collocation-based harmonic balance framework}
\subsection{Principles}
A schematic diagram of the collocation-based harmonic balance framework is presented in Fig. \ref{fig:Fig1}. To solve for the periodic solution of a nonlinear dynamical system, the HB method maps the solution to Fourier coefficients in frequency domain, but it is bogged down in lengthy symbolic calculations for obtaining the Fourier coefficients of nonlinear function. In contrast, the time domain analysis of the proposed framework is very straightforward and computationally economic by mapping the solution to collocation nodes. Moreover, it is strictly proven that the algebraic system of the collocation-based harmonic balance is equivalent to that of the HB method when the dealiasing condition is fulfilled, i.e. $M>(\phi+1)N$, where $M$ is the collocation number, $\phi$ is the degree of nonlinearity, and $N$ is the order of the method (number of harmonics retained). The HDHB method is proven to be equivalent to a time domain collocation method\cite{dai2012simple}, but its number of collocation nodes is chosen as $M=2N+1$. As indicated in Fig. \ref{fig:Fig1}, the collocation number of HDHB is insufficient to aviod aliasing, because high order harmonics between $N$ and $\phi N$ are misinterpreted as low order harmonics. However, by rising the number of collocation nodes, which is tantamount to increasing the sampling rate, the proposed collocation framework can prevent the unwanted high order harmonics from being mistakenly mixed into the low order harmonics. With enough collocation nodes, all the nonlinearity-induced higher order harmonics   can be excluded. Based on the Shannon sampling theorem, the AFT method adopts $M\geq2\phi N+1$ to avoid aliasing\cite{krack2019harmonic} (see Supplementary Part 3), but it is not the optimal sampling rate as observed in numerical experiments. According to the aliasing matrix that is first found in this paper, the number of collocations can be much lower than the AFT sampling rate. In Fig. \ref{fig:Fig1}, the non-zero elements of the aliasing matrix are marked as red, implying that the harmonics correlated by these elements are mixed. The non-zero elements of aliasing matrix gradually decrease with the rising of collocation nodes. This property of aliasing matrix leads to the discovery of a new conditional identity (see Supplementary Part 4) that links the nonlinear Fourier components in the time domain and the frequency domain. This conditional equivalence may find applications in a wide range of problems where Fourier analysis is employed. In the proposed RHB method, where $M=(\phi+1)N+1$, the aliasing matrix becomes exactly zero. Unless otherwise mentioned, the collocation number of the RHB method is set as $M=(\phi+1)N+1$ in the following computations.

\begin{figure}[tbhp]
	\centering
	\includegraphics[width=1\textwidth]{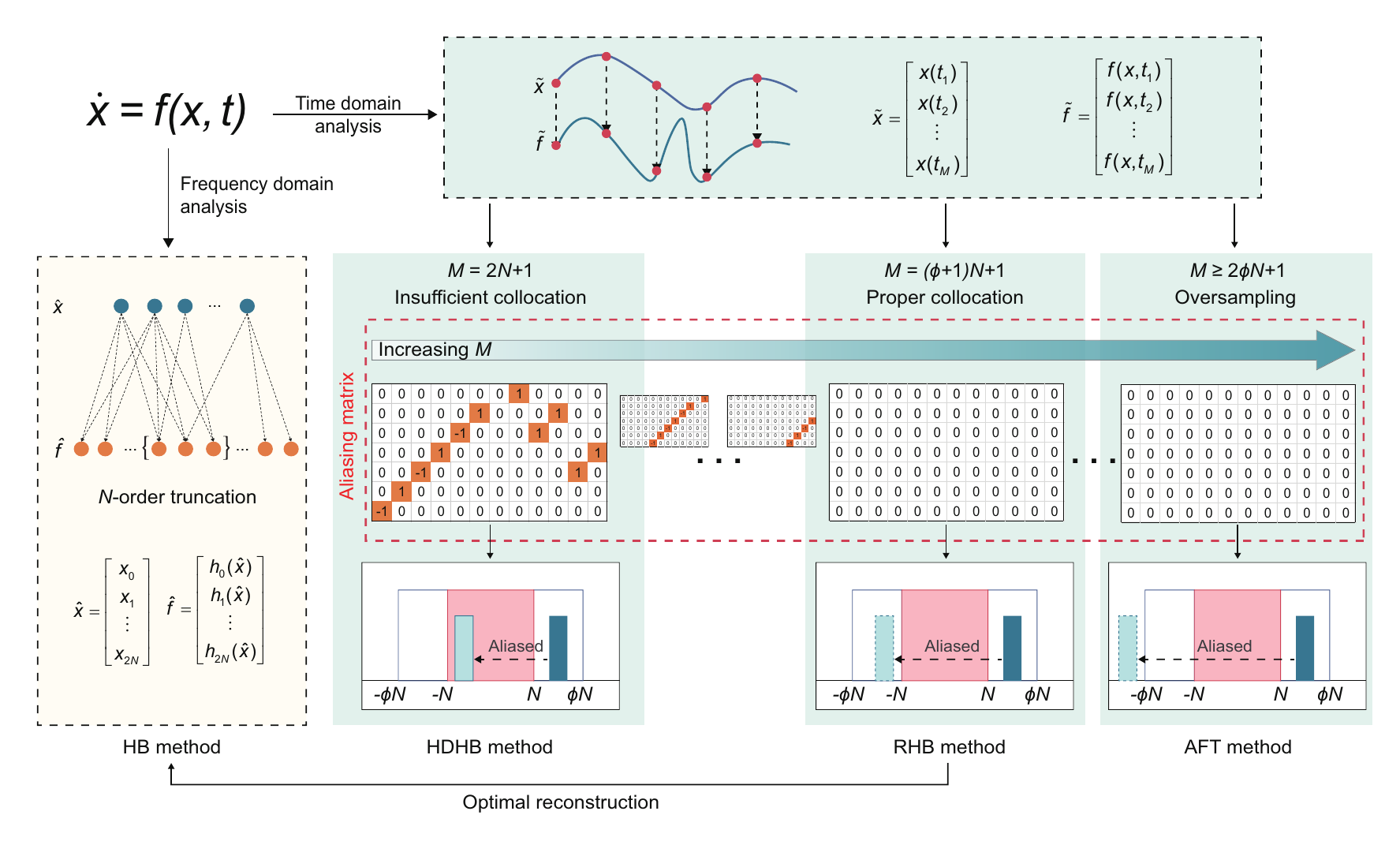}
	\caption{\textbf{Illustration of the collocation-based harmonic balance framework that unifies the HB, the HDHB, and the AFT methods.} Under the proposed framework, the RHB method is proposed to optimally reconstruct the HB method in time domain without causing aliasing. Unlike the classical harmonic balance method that seeks the coefficients of harmonics in frequency domain, the proposed framework solves for the collocation states in time domain, leading to three different methods, i.e., the HDHB, the RHB, and the AFT methods, by choosing different number of collocations.}
	\label{fig:Fig1}
\end{figure}

\subsection{Reconstruction harmonic balance method}
 Consider a general nonlinear dynamical system described by
\begin{equation}
	\dot{\mathbf{x}}=f(\mathbf{x},t).
	\label{con:DifferentialEquation}
\end{equation}
Since the periodic solution is our present interest, the state vector $\mathbf{x}$ can be approached by a truncated Fourier series in time. That is
\begin{equation}
	\mathbf{x}=\mathbf{x}_0+\sum_{n=1}^N[\mathbf{x}_{2n-1}\cos(n\omega t)+\mathbf{x}_{2n}\sin(n\omega t)],
	\label{con:xFourierForm}
\end{equation}
where $N$ is the truncation order, i.e., order of the method, and $\mathbf{x}_0$, $\mathbf{x}_1$, $\cdots$, $\mathbf{x}_{2n}$ are unknown Fourier coefficients. The first derivative of equation (\ref{con:xFourierForm}) with respect to time is
\begin{equation}
	\dot{\mathbf{x}}=\sum_{n=1}^N[-n\omega\mathbf{x}_{2n-1}\sin(n\omega t)+n\omega\mathbf{x}_{2n}\cos(n\omega t)].
	\label{con:xDotFourierForm}
\end{equation}
The classical HB method requires first substituting equations (\ref{con:xFourierForm})-(\ref{con:xDotFourierForm}) into the governing equation (\ref{con:DifferentialEquation}) and then balancing the Fourier coefficients of each harmonic up to the truncation order. Thus, equation (\ref{con:DifferentialEquation}) turns out to be the so-called HB algebraic system (see Supplementary Part 1)
\begin{equation}
	\omega\mathbf{A}\hat{\mathbf{x}}=\hat{f}(\hat{\mathbf{x}}),
	\label{con:HBForm}
\end{equation}
where
\[
\mathbf{A}=
\left[
\begin{array}{ccccc}
	0        &0               &0               &\cdots&0     \\
	0        &\mathbf{J}_{1}&0               &\cdots&0     \\
	0        &0               &\mathbf{J}_{2}&\cdots&0     \\
	\vdots&\vdots      &\vdots       &\ddots&\vdots\\
	0        &0              &0                &\cdots&\mathbf{J}_{N}
\end{array}
\right]
,\ \
\mathbf{J}_{n}=n
\left[
\begin{array}{cc}
	0&1\\-1&0
\end{array}
\right],
\]
and $\hat{f}$ is a nonlinear function of $\hat{\mathbf{x}}$; $\hat{\mathbf{x}}=[\mathbf{x}_0\ \ \mathbf{x}_1\ \ \cdots\ \ \mathbf{x}_{2n}]^\mathrm{T}$. Equation (\ref{con:HBForm}) contains $2N+1$ nonlinear algebraic equations (NAEs) to be solved by NAE solvers. To avoid symbolic calculations in the HB method, a series of time domain methods are developed\cite{hall2002computation,dai2012simple,krack2019harmonic}. Similar to the time domain methods, we establish the relation between the unknown Fourier coefficients and temporal quantities at $M$ equally spaced nodes over one period:
\begin{equation}
	\tilde{\mathbf{x}}=\mathbf{E}\hat{\mathbf{x}},
	\label{con:xtilde_equal_E_xhat}
\end{equation}
where $\tilde{\mathbf{x}}=[\mathbf{x}(t_1)\ \ \mathbf{x}(t_2)\ \ \cdots\ \ \mathbf{x}(t_M)]^\mathbf{T}$, and the collocation matrix $\mathbf{E}$ is
\[
\mathbf{E}=
\left[
\begin{array}{cccccc}
	1     &\cos(\omega t_1)  &\sin(\omega t_1)  &\cdots&\cos(N\omega t_1)  	&\sin(N\omega t_1)\\
	1     &\cos(\omega t_2)  &\sin(\omega t_2)  &\cdots&\cos(N\omega t_2)  	&\sin(N\omega t_2)\\
	\vdots&\vdots            &\vdots            &\ddots&\vdots             &\vdots           \\
	1     &\cos(\omega t_{M})&\sin(\omega t_{M})&\cdots&\cos(N\omega 	t_{M})&\sin(N\omega t_{M})
\end{array}
\right].
\]
Since rank($\mathbf{E}$)$=2N+1$, we have
\begin{equation}
	\mathbf{E^+E=I}_{2N+1},
	\label{con:EpE_equal_I}
\end{equation}
where $\mathbf{E^+}$ is the Moore-Penrose inverse matrix of $\mathbf{E}$ with the explicit expression being:
\[
\mathbf{E^+}=\dfrac{2}{M}
\left[
\begin{array}{cccc}
	\dfrac{1}{2}      &\dfrac{1}{2}      &\cdots&\dfrac{1}{2}        \\
	\cos(\omega t_1) &\cos(\omega t_2) &\cdots&\cos(\omega t_{M}) \\
	\sin(\omega t_1) &\sin(\omega t_2) &\cdots&\sin(\omega t_{M}) \\
	\cos(2\omega t_1)&\cos(2\omega t_2)&\cdots&\cos(2\omega t_{M})\\
	\sin(2\omega t_1)&\sin(2\omega t_2)&\cdots&\sin(2\omega t_{M})\\
	\vdots           &\vdots           &\ddots&\vdots             \\
	\cos(N\omega t_1)&\cos(N\omega t_2)&\cdots&\cos(N\omega t_{M})\\
	\sin(N\omega t_1)&\sin(N\omega t_2)&\cdots&\sin(N\omega t_{M})
\end{array}
\right].
\]
Multiplying both sides of equation (\ref{con:xtilde_equal_E_xhat}) by $\mathbf{E}^+$ generates $\mathbf{E}^+\tilde{\mathbf{x}}=\hat{\mathbf{x}}$. In the RHB method, a relation $\hat{f}(\hat{\mathbf{x}})=\mathbf{E}^+\tilde{f}(\tilde{\mathbf{x}})$ is introduced to replace the original term, where $\tilde{f}(\tilde{\mathbf{x}})$ is the value of $f(\mathbf{x})$ at $M$ discrete time collocations $\tilde{\mathbf{x}}$. Therefore, Equation (\ref{con:HBForm}) can be rewritten as
\begin{equation}
	\omega\mathbf{AE^+}\tilde{\mathbf{x}}=\mathbf{E^+}\tilde{f}(\tilde{\mathbf{x}}).
	\label{con:RHBForm}
\end{equation}
Equation (\ref{con:RHBForm}) is the RHB algebraic system consisting of $2N+1$ nonlinear algebraic equations. As will be proven later, the algebraic system (\ref{con:RHBForm}) is strictly equivalent to the HB algebraic system when the number of collocation nodes $M>(\phi+1)N$. However, the system (\ref{con:RHBForm}) is expressed by time domain quantities and constant matrix, hence completely avoiding the tedious symbolic calculations.

\subsection{Dealiasing mechanism of RHB}
The RHB method, based on equivalently re-expressing the HB method with pure collocations, can completely eliminate the famous aliasing problem that occurs in the HDHB method (see Supplementary Part 2). Affected by aliasing errors, the HDHB method has a severe problem of non-physical solutions. Our previous study demonstrated that the HDHB method is not a variant of HB method, but a time domain collocation method in disguise\cite{dai2012simple}. Furthermore, the principle beneath the aliasing phenomenon has been illustrated as follows\cite{boyd2001chebyshev,dai2014time}:

\begin{quote}
	\emph{Rules of aliasing}: If the interval $\alpha\in[0,2\pi]$ is discretized in time domain with uniform grid spacing $h$, then the wavenumbers in the trigonometric interpolant lies on the range $n\in[-L,L]$ , where $L=\pi/h$ is the so-called "aliasing limit" wavenumber. Higher wavenumbers ($\lvert n \rvert>L$) will be aliased to lower wavenumbers $n_a$:
	\[
	n_a=n-2mL,
	\]
	where $n_a\in[-L,L]$, $m$ is an integer.
\end{quote}

According to \emph{rules of aliasing}, as long as the high order to be aliased are outside the frequency range $[-\phi N,\phi N]$, we can avoid aliasing.

\begin{thm}[Dealiasing]\label{thm1}
	Suppose the nonlinear differential equation (\ref{con:DifferentialEquation}) is in polynomial-type nonlinearity with a degree of nonlinearity $\phi$. Then, the aliasing phenomenon can be eliminated if the number of collocation nodes in the RHB method satisfies $M>(\phi+1)N$, where $N$ is the truncation order of the RHB method.
\end{thm}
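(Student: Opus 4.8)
\emph{Proof strategy.} The plan is to show that, under $M>(\phi+1)N$, the time-domain term $\mathbf{E}^{+}\tilde{f}(\tilde{\mathbf{x}})$ appearing in the RHB system~(\ref{con:RHBForm}) equals the frequency-domain term $\hat{f}(\hat{\mathbf{x}})$ of the HB system~(\ref{con:HBForm}) \emph{exactly}. Since the linear parts already coincide --- $\mathbf{E}^{+}\mathbf{E}=\mathbf{I}_{2N+1}$ in~(\ref{con:EpE_equal_I}) gives $\mathbf{E}^{+}\tilde{\mathbf{x}}=\hat{\mathbf{x}}$ --- this makes~(\ref{con:RHBForm}) coincide term by term with~(\ref{con:HBForm}), so no harmonic generated by the nonlinearity can be folded onto the retained orders $0,1,\dots,N$; that is precisely the assertion that aliasing has been eliminated.

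First I would pin down the bandwidth of the nonlinear term. Because $\mathbf{x}(t)$ in~(\ref{con:xFourierForm}) is a trigonometric polynomial of degree $N$ and $f$ is polynomial of degree $\phi$ in its state argument, the composition $t\mapsto f(\mathbf{x}(t),t)$ is a trigonometric polynomial of degree at most $\phi N$; in exponential form,
\[
f(\mathbf{x}(t),t)=\sum_{|n|\le \phi N}\mathbf{c}_{n}\,e^{\mathrm{i}n\omega t},\qquad \mathbf{c}_{-n}=\overline{\mathbf{c}_{n}},
\]
where the cosine/sine coefficients of orders $0,\dots,N$ assembled into $\hat{f}(\hat{\mathbf{x}})$ are (up to the standard factor of $2$) just $\mathrm{Re}\,\mathbf{c}_{k}$ and $-\mathrm{Im}\,\mathbf{c}_{k}$.

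Next I would apply $\mathbf{E}^{+}$ using its explicit rows together with the discrete orthogonality relation on the $M$ equally spaced nodes,
\[
\frac{1}{M}\sum_{j=1}^{M}e^{\mathrm{i}\ell\,\omega t_{j}}=\begin{cases}1,&\ell\equiv 0 \pmod{M},\\ 0,&\text{otherwise.}\end{cases}
\]
A short computation then shows that the $k$-th harmonic block of $\mathbf{E}^{+}\tilde{f}(\tilde{\mathbf{x}})$ is built from the sums $\sum_{n}\mathbf{c}_{n}$ over all $n$ with $|n|\le\phi N$ and $n\equiv\pm k\pmod{M}$ --- i.e.\ the true coefficients $\mathbf{c}_{\pm k}$ plus the aliased contributions of higher harmonics, exactly as predicted by the quoted \emph{rules of aliasing} with aliasing limit $2L=M$. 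The proof then finishes with the key estimate: if $n\equiv\pm k\pmod{M}$ with $n\neq\pm k$, then $n\mp k$ is a nonzero multiple of $M$, so $|n\mp k|\ge M$; but $|n\mp k|\le|n|+|k|\le\phi N+N=(\phi+1)N<M$, a contradiction. Hence only $n=\pm k$ survives, $\mathbf{E}^{+}\tilde{f}(\tilde{\mathbf{x}})=\hat{f}(\hat{\mathbf{x}})$, and the RHB system is aliasing-free.

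The main obstacle is not one hard step but the bookkeeping in the middle: matching the complex coefficients $\mathbf{c}_{n}$ to the real cosine/sine rows of the explicit $\mathbf{E}^{+}$ (the constant row carries weight $1/M$, the rest $2/M$), and --- if $f$ has explicit time dependence --- checking that the forcing contributes only harmonics of order $\le N$ so that the total bandwidth is still $\phi N$. The genuinely essential inequality is $|n\mp k|\le(\phi+1)N<M$; this is where the threshold $M>(\phi+1)N$ enters and it cannot be weakened, since for $M=(\phi+1)N$ the choice $n=\phi N$, $k=-N$ gives $n-k=M$, so the top nonlinearity-generated harmonic folds directly onto the retained order-$N$ component.
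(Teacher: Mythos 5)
Your proof is correct, but it follows a genuinely different route from the paper's own argument for Theorem~1. The paper proves dealiasing purely at the level of wavenumber intervals: it takes the quoted \emph{rules of aliasing} as given, notes that the nonlinearity populates $[-\phi N,-N)\cup(N,\phi N]$, folds that set by $n\mapsto n-2mL$ (with $2L=M$), and checks case by case in $m$ that the folded set misses $[-N,N]$ exactly when $M>(\phi+1)N/m$ for every integer $m$, i.e. $M>(\phi+1)N$. You instead prove the stronger operator identity $\mathbf{E}^{+}\tilde{f}(\tilde{\mathbf{x}})=\hat{f}(\hat{\mathbf{x}})$ directly, by writing $f(\mathbf{x}(t),t)$ as a degree-$\phi N$ trigonometric polynomial in exponential form and invoking discrete orthogonality $\frac{1}{M}\sum_j e^{\mathrm{i}\ell\omega t_j}=[\ell\equiv 0\ (\mathrm{mod}\ M)]$, so that the $k$-th retained block collects $\mathbf{c}_n$ over $n\equiv\pm k\ (\mathrm{mod}\ M)$ and the bound $|n\mp k|\le(\phi+1)N<M$ kills every spurious term. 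This is in substance the paper's proof of Theorem~2 (conditional equivalence), where the same cancellations are verified via the six-entry table of elements of the aliasing matrix $\mathbf{E}_{\mathrm{A}}=\mathbf{E}^{+}\mathbf{E}_1$; your exponential-form bookkeeping is cleaner, avoids that case analysis, does not rely on the aliasing rule as a black box, and delivers Theorems~1 and~2 in one stroke. Your sharpness remark ($M=(\phi+1)N$ folds $n=\phi N$ onto the order-$N$ component since $\phi N\equiv -N\ (\mathrm{mod}\ M)$) matches the paper's choice $M=(\phi+1)N+1$ as the minimal collocation number. Two small points of care: state explicitly that the explicit forcing contributes harmonics of order at most $N$ so the total bandwidth really is $\phi N$ (you flag this but it is part of the hypothesis you need), and note that $k$ in your sharpness example should be read as the conjugate index of the retained harmonic $N$ rather than a retained index $-N$.
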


\begin{proof}[Proof of Theorem~{\upshape\ref{thm1}}]
	For a linear system, viz., $\phi=1$, there is no aliasing. Therefore, $\phi\geq2$ is considered. The higher-order harmonics of RHB algebraic equations lie in the range $[-\phi N,-N)\cup(N,\phi N]$ due to the nonlinearity $\phi$. According to \emph{rules of aliasing}, higher-order harmonics will be aliased to the range of $[-\phi N-2mL,-N-2mL)\cup(N-2mL,\phi N-2mL]$. To avoid aliasing, the aliased range should not intersect with the low order range $[-N,N]$. There are three situations according to different $m$:
	\begin{enumerate}
		\item If $m=0$: there is obviously no intersection between interval $[-\phi N-2mL,-N-2mL)\cup(N-2mL,\phi N-2mL]$ and interval $[-N,N]$.
		
		\item if $m\geq1$: there is obviously no intersection between interval $[-\phi N-2mL,-N-2mL)$ and interval $[-N,N]$. To ensure  $(N-2mL,\phi N-2mL]$ does not intersect with $[-N,N]$, there must be $N-2mL>N$ or $\phi N-2mL<-N$. hence leading to $M>(\phi+1)N/m$. 
		
		\item If $m\leq-1$: it is similar to case 2.
	\end{enumerate}
	To sum up, $M>(\phi+1)N$ must be satisfied to ensure that $M>(\phi+1)N/m$ for any integer $m$.
\end{proof}

\subsection{Conditional equivalence of RHB and HB}
The previous section has explained that aliasing errors can be avoided when $M$ satisfies certain condition. In this section, we explain why the RHB method is equivalent to the HB method.

\begin{thm}[Conditional equivalence]\label{thm2}
	If number of collocations $M$, truncated order of harmonics $N$ and degree of nonlinearity $\phi$ satisfy $M>(\phi+1)N$, then the RHB method and the HB method are equivalent.
\end{thm}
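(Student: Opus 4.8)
The plan is to show that, under the hypothesis $M>(\phi+1)N$, the RHB algebraic system \eqref{con:RHBForm} coincides \emph{term by term} with the HB algebraic system \eqref{con:HBForm}; since the two systems then consist of literally the same $2N+1$ equations in the same unknowns $\hat{\mathbf{x}}$, they have the same solution set, which is the asserted equivalence. I would split the comparison into the linear (derivative) part on the left and the nonlinear part on the right. For the linear part, substitute $\tilde{\mathbf{x}}=\mathbf{E}\hat{\mathbf{x}}$ into the left-hand side of \eqref{con:RHBForm} and use the left-inverse relation \eqref{con:EpE_equal_I}, $\mathbf{E}^{+}\mathbf{E}=\mathbf{I}_{2N+1}$, to obtain $\omega\mathbf{A}\mathbf{E}^{+}\tilde{\mathbf{x}}=\omega\mathbf{A}\mathbf{E}^{+}\mathbf{E}\hat{\mathbf{x}}=\omega\mathbf{A}\hat{\mathbf{x}}$, which is exactly the left-hand side of \eqref{con:HBForm}. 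This step is immediate and uses only $\operatorname{rank}(\mathbf{E})=2N+1$, with no condition on $M$.

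The substance is the nonlinear part, where I must prove $\mathbf{E}^{+}\tilde{f}(\tilde{\mathbf{x}})=\hat{f}(\hat{\mathbf{x}})$, i.e.\ that the discrete projection of the sampled nonlinearity agrees with its exact Fourier--Galerkin projection. First I would note that, because $\mathbf{x}(t)$ is the degree-$N$ trigonometric polynomial \eqref{con:xFourierForm} and $f$ is a polynomial of degree $\phi$ in its argument (any explicit $t$-forcing contributing no wavenumber beyond $N$), the composed signal $g(t):=f(\mathbf{x}(t),t)$ is itself a trigonometric polynomial whose spectrum lies in the band $|k|\le\phi N$; write $g(t)=\sum_{|k|\le\phi N}c_{k}e^{\mathrm{i}k\omega t}$. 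The exact HB term $\hat{f}(\hat{\mathbf{x}})$ assembles the coefficients $c_{k}$ for $|k|\le N$ (in the real $\cos/\sin$ basis, with the usual $1/2$ weight on the DC term). On the other hand, reading off the explicit form of $\mathbf{E}^{+}$ given in the excerpt, $\mathbf{E}^{+}\tilde{f}(\tilde{\mathbf{x}})$ is precisely the $M$-point discrete Fourier transform of the node values $g(t_{1}),\dots,g(t_{M})$, truncated to $|k|\le N$. Invoking the discrete orthogonality of $\{e^{\mathrm{i}k\omega t_{j}}\}_{j=1}^{M}$ over equally spaced nodes, namely $\tfrac{1}{M}\sum_{j=1}^{M}e^{\mathrm{i}(p-q)\omega t_{j}}=1$ when $p\equiv q\pmod{M}$ and $0$ otherwise, the $k$-th discrete coefficient equals $\sum_{m\in\mathbb{Z}}c_{k+mM}$, i.e.\ the true coefficient $c_{k}$ plus an aliased tail $\sum_{m\neq0}c_{k+mM}$.

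It then remains to kill the aliased tail, and this is exactly where the hypothesis enters. For $|k|\le N$ and any integer $m\neq0$, the folded wavenumber $k+mM$ has modulus at least $M-N$; the bound $M>(\phi+1)N$ gives $M-N>\phi N$, so every such $k+mM$ lies strictly outside the band $[-\phi N,\phi N]$ and hence $c_{k+mM}=0$. This is precisely the geometric content already proved in Theorem~\ref{thm1} (no harmonic generated by the degree-$\phi$ nonlinearity folds back into $[-N,N]$), so I would simply cite it rather than redo the case analysis. Consequently each discrete coefficient equals the corresponding exact coefficient, $\mathbf{E}^{+}\tilde{f}(\tilde{\mathbf{x}})=\hat{f}(\hat{\mathbf{x}})$; combining this with the linear part shows that \eqref{con:RHBForm} and \eqref{con:HBForm} are the same system, completing the proof.

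I expect the main obstacle to be the bookkeeping in the discrete-orthogonality step: translating the complex-exponential identity into the real $\cos$/$\sin$ entries of $\mathbf{E}^{+}$, carefully tracking which cross products $\cos(p\omega t_{j})\cos(q\omega t_{j})$, $\sin(p\omega t_{j})\sin(q\omega t_{j})$, $\cos(p\omega t_{j})\sin(q\omega t_{j})$ survive summation over the $M$ nodes, and checking the boundary cases $k=0$ and the sign-folding $p\equiv -q\pmod M$ separately (the latter is harmless here since $2N<M$). The only surviving anomalies are the $p\equiv\pm q\pmod M$ terms, which are exactly the ones eliminated by $M>(\phi+1)N$; a clean statement of the band-limiting assumption on any explicit time dependence of $f$ is the other point I would make explicit, since the whole argument rests on $g$ being genuinely band-limited to $\phi N$.
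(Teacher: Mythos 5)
Your proposal is correct and follows essentially the same route as the paper's proof: both reduce the claim to $\mathbf{E}^{+}\tilde{f}(\tilde{\mathbf{x}})=\hat{f}(\hat{\mathbf{x}})$, exploit the fact that the composed nonlinearity is band-limited to wavenumber $\phi N$, and show that the aliased contributions (the paper's aliasing matrix $\mathbf{E}_{\mathrm{A}}=\mathbf{E}^{+}\mathbf{E}_{1}$, your tail $\sum_{m\neq 0}c_{k+mM}$) vanish because $M-N>\phi N$. Your complex-exponential discrete-orthogonality argument compresses the paper's six-case real $\cos/\sin$ table into one line, and your explicit flagging of the band-limit assumption on the forcing is a point the paper leaves implicit, but the underlying argument is the same.
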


\begin{proof}[Proof of Theorem~{\upshape\ref{thm2}}]
	Since $\tilde{\mathbf{x}}=\mathbf{E}\hat{\mathbf{x}}$ always holds, we just prove that $\mathbf{E^+}\tilde{f}(\tilde{\mathbf{x}})=\hat{f}(\hat{\mathbf{x}})$. $\hat{f}(\hat{\mathbf{x}})$ and $\tilde{f}(\tilde{\mathbf{x}})$ are assumed as $\hat{f}(\hat{\mathbf{x}})=[\mathbf{h}_0\ \ \mathbf{h}_1\ \ \cdots\ \ \mathbf{h}_{2N}]^\mathrm{T}$ and $\tilde{f}(\tilde{\mathbf{x}})=[f(\mathbf{x}(t_1))\ \ f(\mathbf{x}(t_2))\ \ \cdots\ \ f(\mathbf{x}(t_M))]^{\mathrm{T}}$, where $\mathbf{h}_0$, $\mathbf{h}_1$, $\cdots$, $\mathbf{h}_{2N}$ are the Fourier coefficients, and $f(\mathbf{x}(t_i))$ is the corresponding temporal quantity at prescribed time instant $t_i$, 
	\[
	f(\mathbf{x}(t_i))=\mathbf{h}_0+\sum_{n=1}^{\phi N}[\mathbf{h}_{2n-1}\cos(n\omega t_i)+\mathbf{h}_{2n}\sin(n\omega t_i)].
	\]
	
	For $\phi=1$, it is obvious that $\mathbf{E^+}\tilde{f}(\tilde{\mathbf{x}})=\hat{f}(\hat{\mathbf{x}})$ holds.
	
	For $\phi\geq2$, $\tilde{f}(\tilde{\mathbf{x}})$ can be expressed as
	\[
	\tilde{f}(\tilde{\mathbf{x}})=
	\left[
	\begin{array}{c}
		f(\mathbf{x}(t_1)) \\
		f(\mathbf{x}(t_2)) \\
		\vdots\\
		f(\mathbf{x}(t_M))
	\end{array}
	\right]
	=
	\left[
	\begin{array}{c}
		\mathbf{h}_0+\sum_{n=1}^{\phi N}[\mathbf{h}_{2n-1}\cos(n\omega 		t_1)+\mathbf{h}_{2n}\sin(n\omega t_1)]\\
		\mathbf{h}_0+\sum_{n=1}^{\phi N}[\mathbf{h}_{2n-1}\cos(n\omega 		t_2)+\mathbf{h}_{2n}\sin(n\omega t_2)]\\
		\vdots\\
		\mathbf{h}_0+\sum_{n=1}^{\phi N}[\mathbf{h}_{2n-1}\cos(n\omega 	t_M)+\mathbf{h}_{2n}\sin(n\omega t_M)]
	\end{array}
	\right].
	\]
	Divides the first $N$-order and $N+1\sim\phi N$ higher-order terms into two parts:
	\[
	\tilde{f}(\tilde{\mathbf{x}})=
	\left[
	\begin{array}{c}
		\mathbf{h}_0+\sum_{n=1}^{N}[\mathbf{h}_{2n-1}\cos(n\omega 	 t_1)+\mathbf{h}_{2n}\sin(n\omega t_1)]\\
		\mathbf{h}_0+\sum_{n=1}^{N}[\mathbf{h}_{2n-1}\cos(n\omega  t_2)+\mathbf{h}_{2n}\sin(n\omega 	t_2)]\\
		\vdots\vspace{2ex}\\
		\mathbf{h}_0+\sum_{n=1}^{N}[\mathbf{h}_{2n-1}\cos(n\omega 	 t_M)+\mathbf{h}_{2n}\sin(n\omega t_M)]
	\end{array}
	\right]
	\]
	\[
	+
	\left[
	\begin{array}{c}
		\sum_{n=N+1}^{\phi N}[\mathbf{h}_{2n-1}\cos(n\omega 	 t_1)+\mathbf{h}_{2n}\sin(n\omega t_1)]\\
		\sum_{n=N+1}^{\phi N}[\mathbf{h}_{2n-1}\cos(n\omega 	 t_2)+\mathbf{h}_{2n}\sin(n\omega t_2)]\\
		\vdots\\
		\sum_{n=N+1}^{\phi N}[\mathbf{h}_{2n-1}\cos(n\omega 	 t_M)+\mathbf{h}_{2n}\sin(n\omega t_M)]
	\end{array}
	\right].
	\]
	The first term of the above formula is $\mathbf{E}\hat{f}(\hat{\mathbf{x}})$, thus
	\[
	\tilde{f}(\tilde{\mathbf{x}})=\mathbf{E}\hat{f}(\hat{\mathbf{x}})+\mathbf{E}_1\hat{f}^{'}(\hat{\mathbf{x}}),
	\]
	where
	\[
	\mathbf{E}_1=
	\left[
	\begin{array}{ccccc}
		\cos(N+1)\omega t_1&\sin(N+1)\omega t_1&\cdots&\cos(\phi N)\omega 	t_1&\sin(\phi N)\omega t_1\\
		\cos(N+1)\omega t_2&\sin(N+1)\omega t_2&\cdots&\cos(\phi N)\omega 	t_2&\sin(\phi N)\omega t_2\\
		\vdots             &\vdots             &\ddots&\vdots                &\vdots                \\
		\cos(N+1)\omega t_M&\sin(N+1)\omega t_M&\cdots&\cos(\phi N)\omega 	t_M&\sin(\phi N)\omega t_M
	\end{array}
	\right],
	\]
	\[
	\hat{f}^{'}(\hat{\mathbf{x}})=
	\left[
	\begin{array}{c}
		\mathbf{h}_{2N+1}\\
		\mathbf{h}_{2N+2}\\
		\vdots\\
		\mathbf{h}_{\phi N}
	\end{array}
	\right].
	\]
	Therefore, $\mathbf{E}^+\tilde{f}(\tilde{\mathbf{x}})$ is
	\[
	\mathbf{E}^+\tilde{f}(\tilde{\mathbf{x}})=\mathbf{E}^+\mathbf{E}\hat{f}(\hat{\mathbf{x}})+\mathbf{E}^+\mathbf{E}_1\hat{f}^{'}(\hat{\mathbf{x}})=\hat{f}(\hat{\mathbf{x}})+\mathbf{E}^+\mathbf{E}_1\hat{f}^{'}(\hat{\mathbf{x}}).
	\]
	We define $\mathbf{E}_{\mathrm{A}}=\mathbf{E}^+\mathbf{E}_1$ as "aliasing matrix". Next, we investigate the effects of the number of collocations $M$ on the aliasing matrix $\mathbf{E}_{\mathrm{A}}$. In details, the elements of row $i$ and column $j$ of the aliasing matrix $\mathbf{E}_{\mathrm{A}}$ are shown in  Table \ref{tab1}.
	
	\begin{table}[h]
		\begin{center}
			\begin{minipage}{\textwidth}
				\caption{Explicit expression of the elements in aliasing matrix $\mathbf{E}_{\mathrm{A}}$}\label{tab1}
				\begin{tabular*}{\textwidth}{@{\extracolsep{\fill}}ccc@{\extracolsep{\fill}}}
					\toprule%
					$j$&$i$& Elements\\
					\midrule
					&$i=1$&$\dfrac{1}{M}\sum\limits_{n=1}^{M}\cos(N+\dfrac{j+1}{2})\omega t_n$\\
					$j$\ (odd)&$i\geq2$ (odd)&$\dfrac{2}{M}\sum\limits_{n=1}^{M}\sin\dfrac{i-1}{2}\omega t_n\cos(N+\dfrac{j+1}{2})\omega t_n$\\
					&$i\geq2$ (even)&$\dfrac{2}{M}\sum\limits_{n=1}^{M}\cos\dfrac{i}{2}\omega t_n\cos(N+\dfrac{j+1}{2})\omega t_n$\\
					\midrule
					&$i=1$&$\dfrac{1}{M}\sum\limits_{n=1}^{M}\sin(N+\dfrac{j+1}{2})\omega t_n$\\
					$j$ (even)&$i\geq2$ (odd)&$\dfrac{2}{M}\sum\limits_{n=1}^{M}\sin\dfrac{i-1}{2}\omega t_n\sin(N+\dfrac{j}{2})\omega t_n$\\
					&$i\geq2$ (even)&$\dfrac{2}{M}\sum\limits_{n=1}^{M}\cos\dfrac{i}{2}\omega t_n\sin(N+\dfrac{j}{2})\omega t_n$\\
					\bottomrule
				\end{tabular*}
			\end{minipage}
		\end{center}
	\end{table}
	The elements in the aliasing matrix $\mathbf{E}_{\mathrm{A}}$ are discussed as follows:
	\begin{enumerate}
		\item $\dfrac{1}{M}\sum_{n=1}^{M}\cos(N+\dfrac{j+1}{2})\omega t_n$: This term is equal to 1 when $i+j=2(kM-N)-1$ and $M\leq\phi N$ hold, where $k$ is any positive integer satisfying $kM\leq\phi N$. This term is always zero if $M>\phi N$.
		
		\item $\dfrac{1}{M}\sum_{n=1}^{M}\sin(N+\dfrac{j}{2})\omega t_n$: Regardless of the value of $M$, this term is always zero.
		
		\item $\dfrac{2}{M}\sum_{n=1}^{M}\sin\dfrac{i-1}{2}\omega t_n\cos(N+\dfrac{j+1}{2})\omega t_n$: Regardless of the value of $M$, this term is always zero.
		
		\item $\dfrac{2}{M}\sum_{n=1}^{M}\sin\dfrac{i-1}{2}\omega t_n\sin(N+\dfrac{j}{2})\omega t_n$: This term is equal to -1 when $i+j=2(kM-N)+1$ and $M\leq(\phi+1)N$ hold, where $k$ is any positive integer satisfying $kM\leq(\phi+1)N$. In addition, if $M\leq\phi N-1$, this term will be 1 when $j-i=2(kM-N)-1$ holds. This term is always zero if $M>(\phi+1)N$.
		
		\item $\dfrac{2}{M}\sum_{n=1}^{M}\cos\dfrac{i}{2}\omega t_n\cos(N+\dfrac{j+1}{2})\omega t_n$: This term is equal to 1 when $i+j=2(kM-N)-1$ and $M\leq(\phi+1)N$ hold, where $k$ is any positive integer satisfying $kM\leq(\phi+1)N$. In addition, if $M\leq\phi N-1$, this term will be 1 when $j-i=2(kM-N)-1$ holds. This term is always zero if $M>(\phi+1)N$.
		
		\item $\dfrac{2}{M}\sum_{n=1}^{M}\cos\dfrac{i}{2}\omega t_n\sin(N+\dfrac{j}{2})\omega t_n$: Regardless of the value of $M$, this term is always zero.
	\end{enumerate}
	To sum up, all elements in the aliasing matrix will be zero if $M>(\phi+1)N$. This means the aliasing matrix $\mathbf{E}_{\mathrm{A}}$ is a zeros matrix, so that
	\[
	\mathbf{E}^+\tilde{f}(\tilde{\mathbf{x}})=\hat{f}(\hat{\mathbf{x}})+\mathbf{E}_{\mathrm{A}}\hat{f}^{'}(\hat{\mathbf{x}})=\hat{f}(\hat{\mathbf{x}}).
	\]
	The conditional equivalence of $\mathbf{E}^+\tilde{f}(\tilde{\mathbf{x}})$ and $\hat{f}(\hat{\mathbf{x}})$ is proven, so the RHB has been shown to completely reconstruct the HB method by time domain quantities.
\end{proof}

It is emphasized that the conditional identity $\mathbf{E}^+\tilde{f}(\tilde{\mathbf{x}})=\hat{f}(\hat{\mathbf{x}})\ \ (M>(\phi+1)N)$ is a newly discovered identity in this study, which analytically links the nonlinear Fourier components in the time domain and the frequency domain. This conditional identity may find applications in a wide range of problems where Fourier analysis is employed.

\section{Results}
\subsection{Nonlinear Duffing equation}
\begin{figure}[thbp]%
	\centering
	\includegraphics[width=1\textwidth]{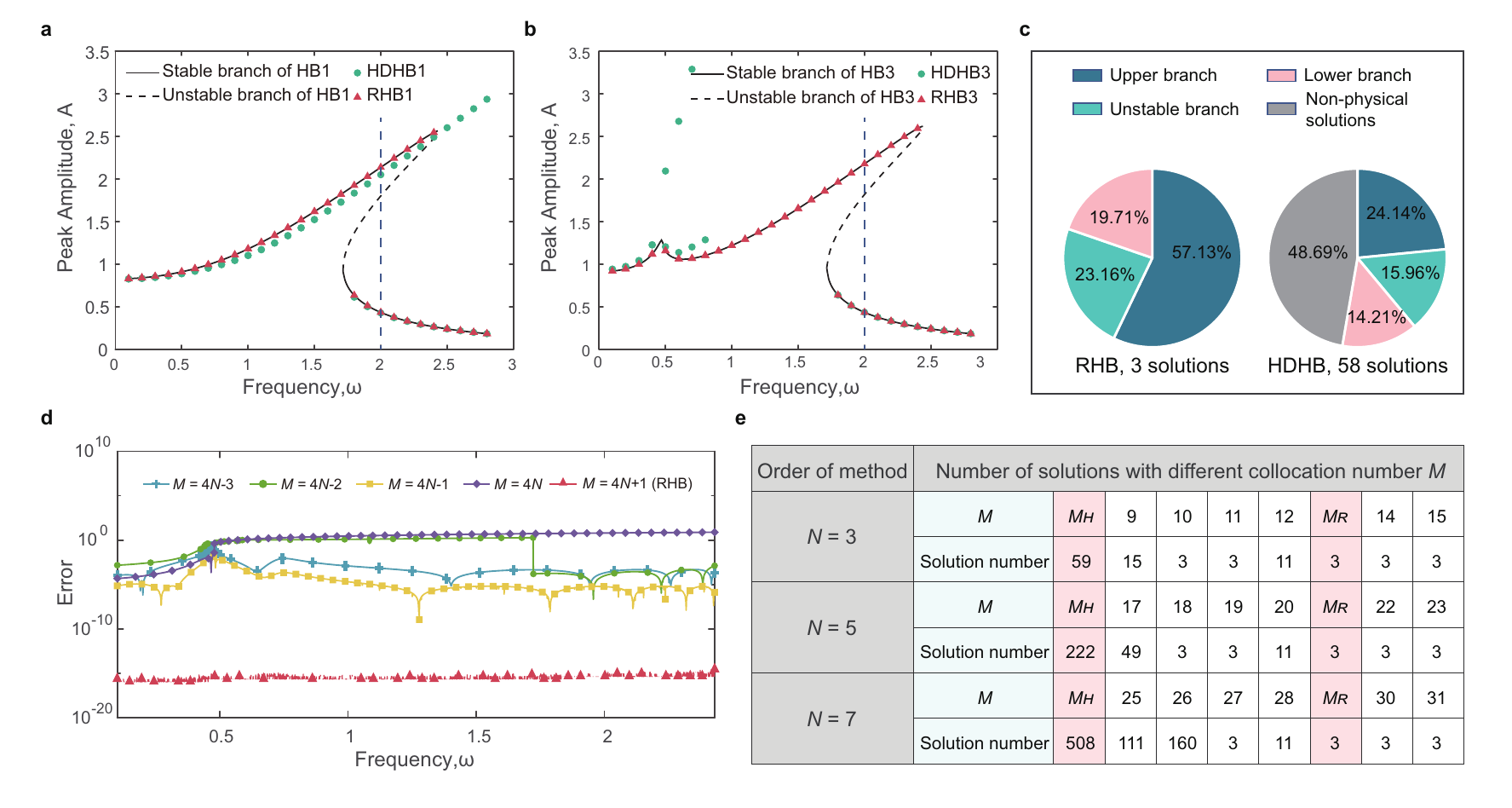}
	\caption{\textbf{Periodic analysis of Duffing equation using the RHB and the HB methods.} The capability of dealiasing and the high accuracy of the RHB method are verified in solving Duffing equation. \textbf{a}, \textbf{b}, Amplitude-frequency response curves for $N=1$ and $N=3$, respectively. \textbf{c}, Statistical distribution of solutions by the RHB3 and the HB3 methods for $\omega=2$. \textbf{d}, Error curves corresponding to solutions with different numbers of collocations. \textbf{e}, Number of solutions for the RHB method with different orders and number of collocations.}
	\label{fig:Fig2}
\end{figure}

The Duffing oscillator is a classical nonlinear dynamical system which is frequently encountered in mechanical vibrations and electrical circuits\cite{lai2005inducing}. We choose the Duffing equation $\ddot{x}+c\dot{x}+kx+\alpha x^{\phi}=F\sin(\omega t)$ as a prototypical example to elucidate the principle and advantages of the presented RHB method, where $c$, $k$, $\alpha$ and $\phi$ are the damping, linear, nonlinear coefficients, and the degree of nonlinearity, respectively; $F$ and $\omega$ are the forcing amplitude and frequency, respectively. We compare the computational accuracy of the RHB method and the HDHB method against the benchmark HB method (see Supplementary Part 5) in terms of the amplitude-frequency curves. We can see from Figs. \ref{fig:Fig2}\textbf{a} and \ref{fig:Fig2}\textbf{b} that the RHB results coincide with the HB results, whereas the HDHB results deviate from the benchmark.

The Monte Carlo simulation is implemented to check the generation of non-physical solutions (see Fig. \ref{fig:Fig2}\textbf{c}). We carry out 10000 computations starting with initial values randomly generated from $[-5, 5]$ for the specified excitation frequency $\omega=2$. Fig. \ref{fig:Fig2}\textbf{c} shows that the RHB method produces 3 physical solutions. In contrast, the HDHB method produces 58 different solutions, 55 of which are non-physical ones. In particular, the probabilities of obtaining a physical solution for RHB is 100\% while that for HDHB is 51.3\%, indicating that the RHB method does not produce non-physical solutions. We get similar results when solving other Duffing equations with mixes of nonlinear terms (see Supplementary Part 6).

\begin{figure}[thbp]%
	\centering
	\includegraphics[width=1\textwidth]{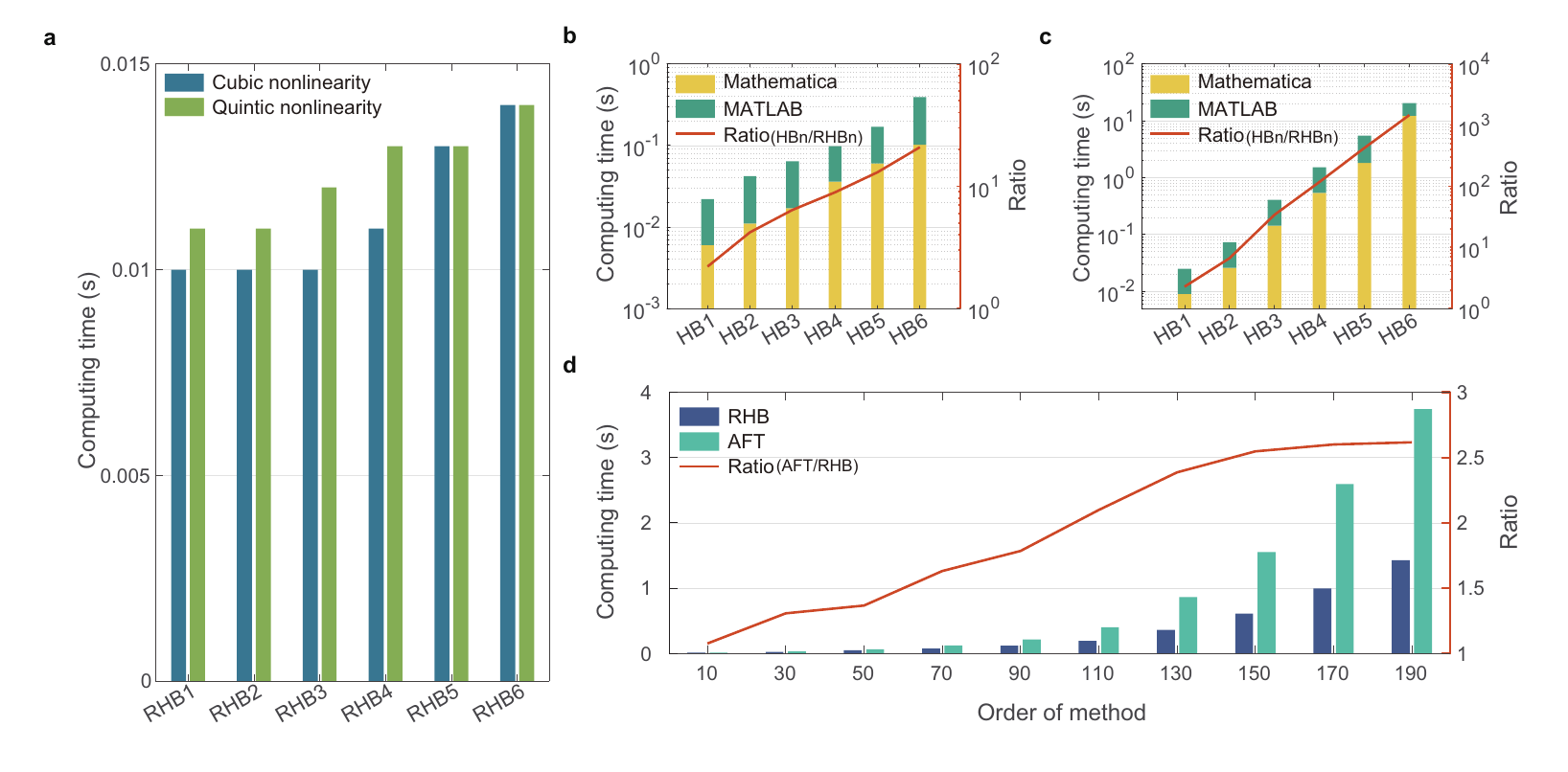}
	\caption{\textbf{Comparison of computational efficiency of the HB, the AFT and the RHB methods.} \textbf{a}, Computing time of the RHB method for solving Duffing equations with cubic and quintic nonlinear terms. \textbf{b}, \textbf{c}, Computing time of the HB method and the ratio to the corresponding RHB method for solving Duffing equations with (\textbf{b}) cubic nonlinearity and (\textbf{c}) quintic nonlinearity. \textbf{d}, Computing time of the AFT method and its ratio to the RHB method when dealing with cubic Duffing equation.}
	\label{fig:Fig3}
\end{figure}

Next, we explore how the number of collocations affects the accuracy of RHB method (choose $N=3$ as an example). Fig. \ref{fig:Fig2}\textbf{d} presents the computational errors of amplitude-frequency curve against the HB3 result. It shows that the RHB method will produce significantly large errors if the minimum number of collocations, i.e., $M_{\mathrm{R}}=(\phi+1)N+1$, is not satisfied. Once $M\geq M_{\mathrm{R}}$, the RHB method produces almost the same result as the HB method (the missing portion of the red curve suggests a near-zero error). For further analysis, we carry out the Monte Carlo simulation at $\omega=2$  for the RHB method with different number of collocations. It is clearly shown from Fig. \ref{fig:Fig2}\textbf{d} that the RHB solutions are confined to the three real ones once the minimum collocations are satisfied, otherwise non-physical solutions may occur. Although for some $M<M_{\mathrm{R}}$ the number of solutions is correct, the solution accuracy is several orders of magnitude lower (see Fig. \ref{fig:Fig2}\textbf{d}).

Then, we compare the computational efficiency of the RHB, HB, and AFT methods for solving Duffing equations with cubic and quintic nonlinearities (see Supplementary Part 7 for details). In Fig. \ref{fig:Fig3}, each data is obtained by averaging one hundred runs. Fig. \ref{fig:Fig3}\textbf{a} shows that the computing time only slightly increases with the increase of the order of method, viz. order of harmonics retained, in the RHB method. Besides, the computational effort does not notably increase with the degree of nonlinearity. By contrast, however, even with the aid of computer algebra software the computing time of HB method increases exponentially with order, leading to an exponentially increasing ratio between the computing time of HB and RHB methods (see Fig. \ref{fig:Fig3}\textbf{b}-\ref{fig:Fig3}\textbf{c}). Even for a six-order case, the RHB is more than three orders of magnitude faster than the HB (see Fig. \ref{fig:Fig3}\textbf{c}). By alternatively transforming harmonic balancing process between frequency domain and time domain, the AFT method speeds up the computations. However, it is still much slower than the proposed RHB method. As shown in Fig. \ref{fig:Fig3}\textbf{d}, the RHB method could be more than two times faster than the AFT method once high order harmonics are involved.

\subsection{Cavitation bubble dynamics}
The Rayleigh-Plesset (R-P) equation describes the motion of a spherical cavitation bubble in incompressible liquid. As a fundamental problem, it receives extensive attentions\cite{marmottant2003controlled, lajoinie2018non, menzl2016molecular, lohse2016homogeneous}. The R-P equation is written as $R\dot{R}=-3\dot{R}^2/2-A\dot{R}/R-B/R+C/R^3+D-E\cos(\omega t)$, where $R$ is the radius of bubble, $\omega$ is the frequency of external excitation, $t$ is the time, and $A-E$ are system parameters (see Supplementary Part 8). Apparently, the R-P equation involves non-polynomial nonlinearity, so it cannot be solved by the HB method. Although the HDHB and AFT methods can solve this problem, they will produce inevitable aliasing errors\cite{liu2006comparison, krack2019harmonic}. Herein, we propose to use a simple recasting technique that can equivalently convert a large class of non-polynomial systems into polynomial types\cite{cochelin2009high, karkar2013high}, so that the RHB method can solve the problem without aliasing. In particular, the present R-P equation is converted to a fourth order polynomial system by recasting (see Supplementary Part 8). Fig. \ref{fig:Fig4}\textbf{a} presents the time response curves of the bubble radius and its changing rate in a whole period. The 250-th order RHB method, i.e., RHB250, was used to accurately capture the expansion ($20 \mu s - 32 \mu s$), the collapse ($32 \mu s - 39 \mu s$), and the rebound ($39\mu s - 60 \mu s, 0 \mu s - 20 \mu s$) stages. First, as the outer pressure decreases, the bubble transits from the rebound of last period to the initial expansion, and approaches the maximum radius at around $32 \mu s$. Then the bubble collapses swiftly during around $7 \mu s$. After that, the bubble reenters the rebound stage as the outer pressure increases, and dissipates energy by damped vibrations.

\begin{figure}[t]
	\centering
	\includegraphics[width=0.55\textwidth]{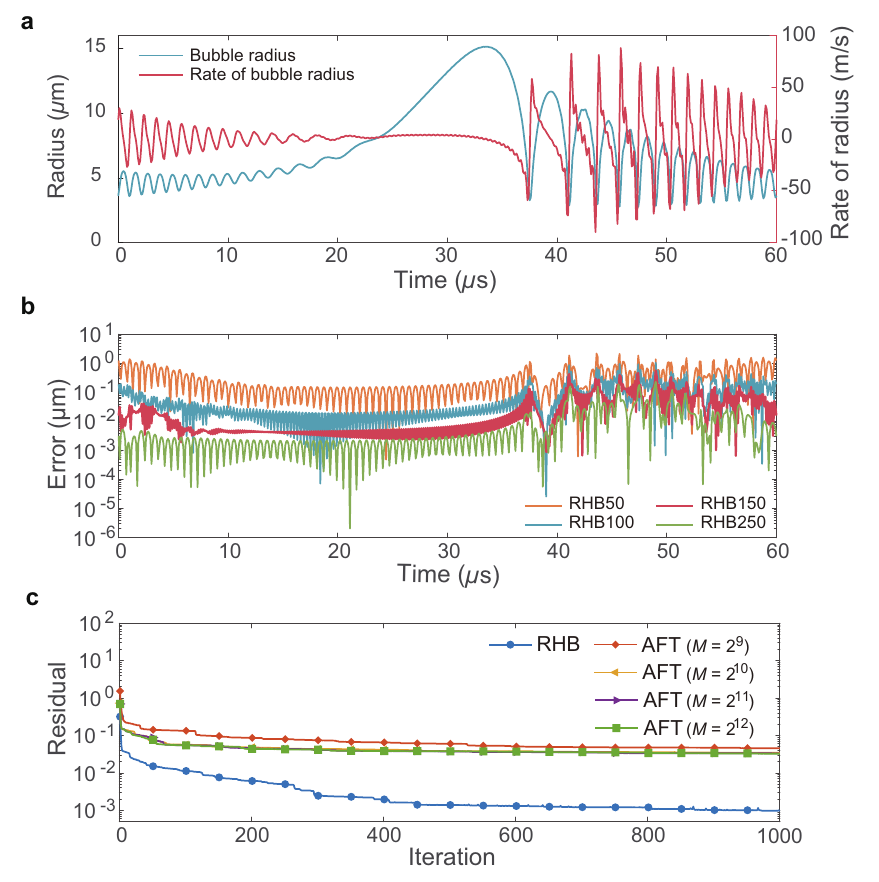}
	\caption{\textbf{Results of the Rayleigh-Plesset equation obtained by the RHB and the AFT method.}  \textbf{a}, Time domain response curves and rates of cavity radius obtained by the RHB method with $N=250$. \textbf{b}, Error curves of the RHB method with different orders against the benchmark numerical result. \textbf{c}, Residual convergence history of resultant algebraic equations of AFT and RHB with $N=150$.}
	\label{fig:Fig4}
\end{figure}

The accuracy of the RHB with different orders is explored in Fig. \ref{fig:Fig4}\textbf{b} with a highly accurate numerical integration method as the benchmark. It shows that the computational error significantly decreases as $N$ increases to 250, implying that the present system contains high frequency responses and thus very higher order harmonics are necessary. Moreover, we observe that the computational accuracy is relatively low in the rebound stage, due to the bubble’s vibration at its natural frequency $\omega_0$, while the basis harmonics are of the integer multiples of frequency $\omega$ that is noncompatible with $\omega_0$. The accuracy of the recasting RHB and the AFT is compared in Fig. \ref{fig:Fig4}\textbf{c}. It shows that the RHB15 converges much faster than the AFT15. The RHB result gradually decreases to $10^{-3}$, which is two orders of magnitude more accurate than the AFT. It is worth to mention that a novel Jacobian-inverse-free method developed by our group\cite{dai2014solutions} is used to solve the RHB and AFT resultant NAEs. If the classical Newton-Raphson method is used, the RHB converges to $10^{-12}$, but the AFT diverges (see Supplementary Part 8). It is concluded that the aliasing induced by non-polynomial nonlinearity is the main cause for the AFT’s poor performance in solving the R-P equation. In sum, the recasting RHB method is superior to the AFT method in both computational accuracy and efficiency when dealing with non-polynomial nonlinear systems.

\subsection{Three-body problem}
In this part, we evaluate the performance of the RHB method by the renowned Circular Restricted Three-Body Problem (CRTBP). The CRTBP is a six-dimensional non-polynomial nonlinear system (see Supplementary Part 10) whose periodic orbits are of vital importance to deep space exploration\cite{koon2000dynamical, stone2019statistical}. So far, the most widely used method for this problem is the differential correction (DC) method; however, the DC method only finds a nearly periodic solution in a recursive shooting manner (see Supplementary Part 11). Moreover, it cannot seek the periodic orbit by prescribing a desired period which is a crucial design factor. In contrast, the present RHB method provides semi-analytical solution that is inherently periodic with a prescribed period. Thus, the RHB method promises to be a powerful tool in designing CRTBP periodic orbits.

\begin{figure}[t]%
	\centering
	\includegraphics[width=1\textwidth]{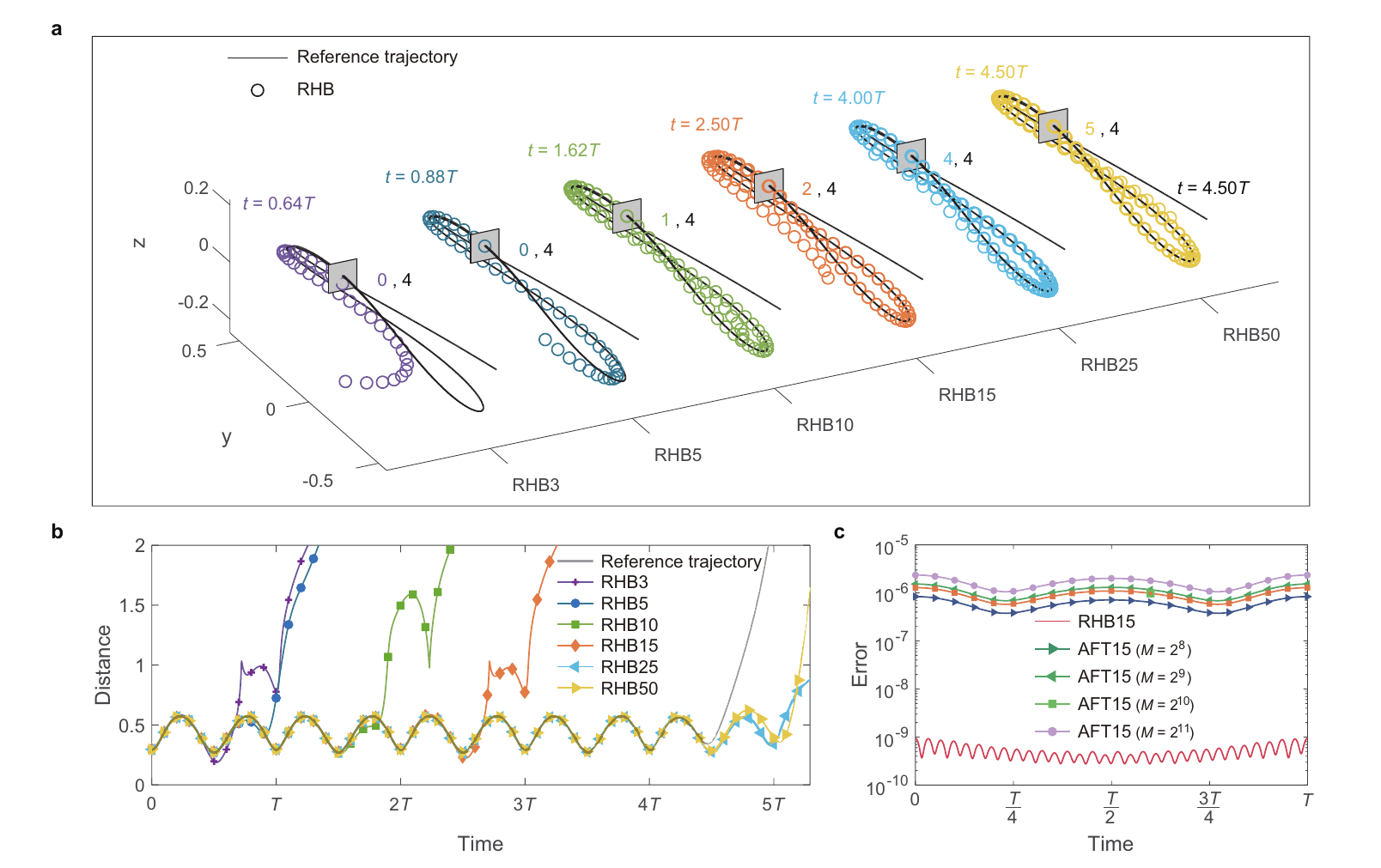}
	\caption{\textbf{Computational results of halo orbit in circular restricted three-body problem.} \textbf{a}, Trajectories obtained by the RHB method with different number of harmonics, in comparison with the trajectory obtained by highly accurate numerical integration of halo orbits. Using initial condition provided by the RHB solution, we recorded the number of periods that can be maintained without control near the initial position (as marked by a grey section) of the orbit. \textbf{b}, The change of the distance between different trajectories and the $L_2$ point. \textbf{c}, Error curves of the RHB and the AFT methods with $N=15$.}
	\label{fig:Fig5}
\end{figure}

Targeting at the Earth-Moon $L_2$ point, the computational results for the halo orbits are shown in Fig. \ref{fig:Fig5}. Using the RHB method, we reproduce the halo orbit of the Queqiao relay satellite for the Chang’e 4 Lunnar farside mission (Fig. \ref{fig:Fig5}\textbf{a}). Concretely, the RHB is used to produce a highly periodic nominal orbit, whose accuracy is verified via numerical orbit propagation. Fig. \ref{fig:Fig5}\textbf{a} presents the numerically propagated orbits obtained with initial states from the RHB with different orders. To evaluate orbit-keeping ability, we set a small section on the $xOz$ plane. The number of times that the orbit passes through the section stands for the number of whole periods before the orbit deviates. It is seen that the DC method, at its highest accuracy, provides a reference trajectory that drifts away after 4.5$T$, with $T$ being the period of the halo orbit, and the higher order RHB method shows a better orbit-keeping property, leaving more intersection points on the section. The RHB50 solution outperforms the best DC solution as is investigated in details in Fig. 5b. Noting that the halo orbit is inherently unstable\cite{srivastava2018halo, gomez1993study}, an arbitrary small disturbance may cause a significant discrepancy. Nevertheless, Figs. \ref{fig:Fig5}\textbf{a} and \ref{fig:Fig5}\textbf{b} have revealed the RHB’s high accuracy. The comparison of computational accuracy of the RHB method with recasting technique and the AFT method is provided in Fig. \ref{fig:Fig5}\textbf{c}. In particular, the RHB15 is compared with the AFT15 with different sampling rates against the benchmark result of RHB50. Numerical result suggests that the present method is about three orders of magnitude more accurate than the AFT method, regardless of the sampling rate adopted.

Furthermore, we use the RHB method in conjunction with the parameter-sweeping approach to capture the amplitude-frequency response curve of the CRTBP (Fig. \ref{fig:Fig6}\textbf{a}). The range of orbital frequency is prescribed as $[1, 3.5]$. The approximate solution of simplified Richardson model (see Supplementary Part 12) is used to supply the RHB method with initials. Four branches in Fig. \ref{fig:Fig6}\textbf{a} are corresponding to the distant retrograde orbit family (Fig. \ref{fig:Fig6}\textbf{b}), the vertical Lyapunov orbit family (Fig. \ref{fig:Fig6}\textbf{c}), the planar Lyapunov orbit family (Fig. \ref{fig:Fig6}\textbf{d}) and the halo orbit family (Fig. \ref{fig:Fig6}\textbf{d}). Actually, the present method can capture more families of periodic orbits (see Supplementary Part 13), although they are not the major concerns of current space missions. Moreover, we notice from Fig. \ref{fig:Fig6}\textbf{a} that the planar Lyapunov orbit family rendezvouses with the halo orbit family, indicating the existence of bifurcation point at $\omega=1.84$. Thus, it is shown that the present method can capture a rich variety of orbit families near $L_2$ and simultaneously pinpoint their frequency ranges.

\begin{figure}[t]%
	\centering
	\includegraphics[width=1\textwidth]{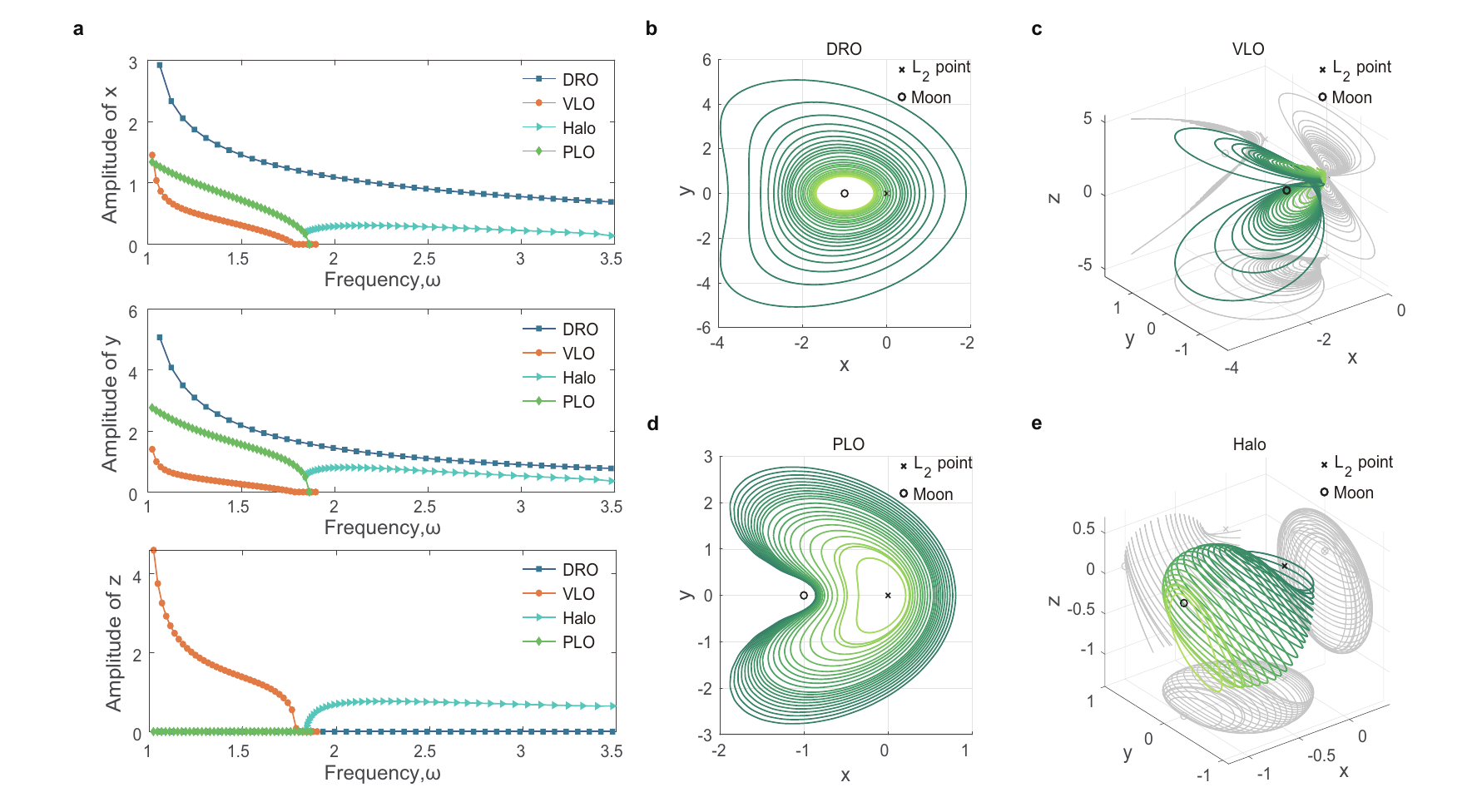}
	\caption{\textbf{Amplitude-frequency response curves of the CRTBP system and its corresponding orbit families.} Combined with frequency-sweeping process, the RHB method provides a novel strategy to periodic orbit design in three body system. \textbf{a}, Amplitude-frequency response curves of $x$, $y$, $z$ directions in the frequency range $[1, 3.5]$. Four curves are shown here. Each curve corresponds to a family of orbits. \textbf{b}, Distant retrograde orbit family. \textbf{c}, Vertical Lyapunov orbit family. \textbf{d}, Planar Lyapunov orbit family. \textbf{e}, Halo orbit family.}
	\label{fig:Fig6}
\end{figure}

\section{Discussion}
Constrained by either high computational cost or potential aliasing, current computational methods for periodic analysis of complex nonlinear dynamical systems are insufficient to deal with multi-disciplinary problems generally and robustly. Our study reveals that the fundamental problem lies in the biased cognition of harmonic balancing process in time domain. In this study we propose a collocation-based harmonic balancing framework to successfully incorporate the HB, the AFT, and the HDHB methods into a unity, which is built on a newly discovered conditional identity by introducing an aliasing matrix.  Based on this framework the powerful RHB method is proposed through letting the aliasing matrix be zero. In solving polynomial nonlinear problems, we demonstrate that the RHB method transforms into the classical HB method ($M=(\phi+1)N+1$), the HDHB method ($M=2N+1$), and the AFT method $(M=2\phi N+1)$ upon manipulating the collocation number. For the more general but tricky non-polynomial nonlinear problems, the RHB method combined with the recasting technique can solve highly accurate solutions without aliasing error. 

First, the prototypical Duffing equation is employed to elucidate the present method. We demonstrate that the RHB method completely avoids the symbolic-calculation-explosion belonging to the conventional high-order HB method. Thus, extremely high order solution can be obtained with little computational effort. We report that even in the framework of simple quintic Duffing equation, the RHB6 achieves more than 1460 times acceleration to the HB6 (Due to limited computer RAM utmost order HB6 is adopted. For higher order, the RHB’s advantage is more obvious). To compare with the HDHB method, statistical analysis of the distribution of solutions is conducted in solving the cubic Duffing equation. We find that the RHB produces all three physical solutions, while the HDHB method produces 55 additional non-physical solutions besides the physical ones. In addition, the advantage of the RHB method over the AFT method is illustrated. Oversampling of the AFT method is conquered by optimally choosing collocation nodes in the RHB, and property of aliasing matrix is demonstrated theoretically and numerically. Being an optimal reconstruction of the HB method, the RHB method is more than two times faster than the AFT method.

Second, the cavitation bubble problem is explored. Besides the high efficiency, the RHB method can be readily used to handle systems with complicated nonlinearity other than the polynomial-type which is strictly required by the HB method. In literature to date, all existing methods cannot generate aliasing-error-free solutions to the non-polynomial nonlinear systems. Here, we adopt the recasting technique to equivalently convert the original system to polynomial type, which is well suited for the proposed RHB method. We exactly obtain the periodic responses of the R-P equation, covering the expansion, the collapse, and the rebound stages, by using very high order RHB method. It is found that as many as 250 harmonics are required to capture the high frequency responses of the R-P equation with computational error of around $10^{-3} \mu m$. The RHB method in conjunction with the recasting technique is much more efficient and accurate than the AFT method, which can only achieve precision of $10^{-1} \mu m$ with sampling rate up to $2^{12}$.

Third, the famous CRTBP model is selected to further validate the present method. Since the present method facilitates high order computations, we employ the RHB50 to find the CRTBP’s periodic solution, i.e., $L_2$ halo orbit. We report that the RHB50 produces highly accurate orbit which is superior to the state-of-the-art differential-correction result. Furthermore, the $L_2$ halo orbit computed by the present RHB method is validated by the real flight data of China’s Queqiao satellite\cite{castelvecchi2018chinese} with the position error less than 8\%. Therefore, the RHB method promises to be an alternative efficient tool in the stage of the reference Halo orbit design. Moreover, the performance of the present method is compared with the AFT method. Specifically, the RHB15 in conjunction with the recasting technique is exploited to solve the CRTBP, and the result is three orders of magnitude more accurate than that of the AFT15. What is intriguing is that we easily obtained five families of CRTBP $L_2$ periodic orbits by means of the RHB method in conjunction with a straightforward parameter-sweeping procedure, opening a new path towards the CRTBP periodic orbit design. In summary, the present method is very efficient and easy-to-implement, and can find immediate applications in a variety of disciplines where highly accurate periodic solutions are wanted.

The present method can be further studied in two directions. First, the accuracy of the present method shows a decreasing effect when extremely high order harmonics are included. A similar phenomenon has been reported in the AFT method, and being attributed to the ill-conditionedness of the resultant nonlinear algebraic equations\cite{krack2019harmonic}. Hence, advanced preconditioning technology is a promising way to improve the present method. Second, once the periodic response of the nonlinear system consists of incommensurable frequencies, the present version of RHB method has to include a great many harmonics to approach the real response. It can be improved for the specific case with incommensurable excitation frequencies by borrowing the concept proposed in the supplemental-frequency harmonic balance method\cite{li2021supplemental}. 

\section*{Acknowledgement}
This work was supported by National Natural Science Foundation of China (No. 12072270, U2013206) and National Key Research and Development Program of China (No. 2021YFA0717100). Additional support was provided by Beijing Aerospace Control Center. We are also grateful to Ruilong Li and Jiye Zhang for helpful insights and discussions.


\newpage
\includepdfmerge{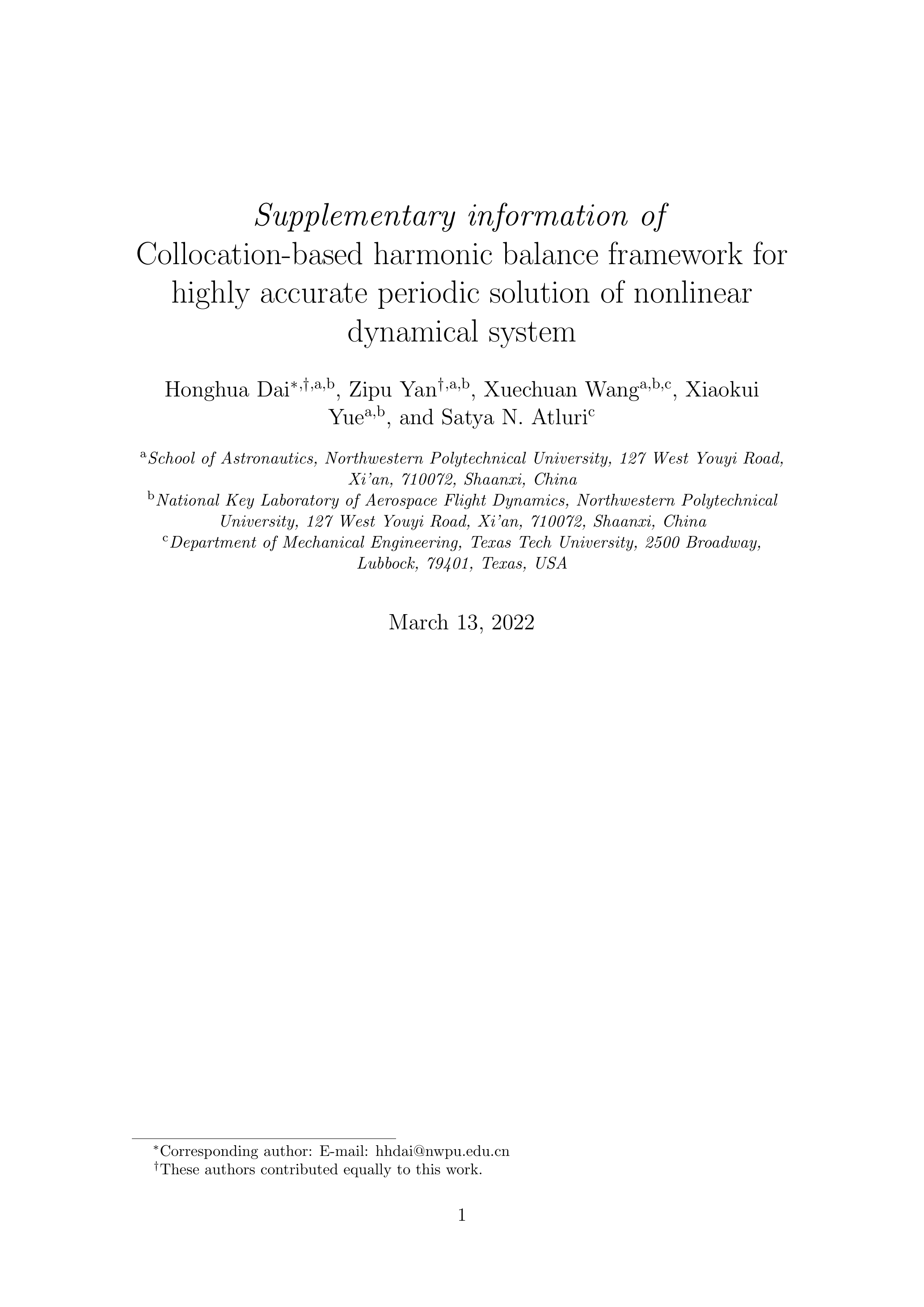, 1-26}

\end{document}